\journalname{Graphs and Combinatorics}
\begin{document}
\newtheorem{thm}{Theorem}
\newtheorem{cor}[thm]{Corollary}
\newcommand\Prob{\mathbf{Pr}}
\newcommand\bc{\mathsf{bc}}
\newcommand\bp{\mathsf{bp}}
\newcommand\BP{\mathsf{CB}}
\newcommand\MP{\mathsf{CM}}
\newcommand\vc{\mathsf{vc}}
\newcommand\vp{\mathsf{vp}}
\newcommand\vcf{\mathsf{vc}^*}
\newcommand\vpf{\mathsf{vp}^*}
\newcommand\HH{\mathcal H}
\newcommand\load{\mathsf{load}}
\newcommand\F{\mathcal F}

\title{Erd\H os-Pyber theorem for hypergraphs and secret sharing}
\author{L\'aszl\'o Csirmaz \and P\'eter Ligeti \and G\'abor Tardos}
\institute{L. Csirmaz\\
Central European University, Budapest, Hungary\\ 
\email{csirmaz@renyi.hu} \\[5pt]
P. Ligeti\\
Department of Computeralgebra and ELTECRYPT Research Group, \\
E\"otv\"os Lor\'and University,
Budapest, Hungary \\
\email{turul@cs.elte.hu}\\[5pt]
G. Tardos \\
R\'enyi Institute of Mathematics,
Budapest, Hungary\\
\email{tardos.gabor@renyi.mta.hu}}
\date{}
\maketitle

\begin{abstract}
A new, constructive proof with a small explicit constant is given to the
Erd\H os-Pyber theorem which says that the edges of a graph on $n$ vertices
can be partitioned into complete bipartite subgraphs so that every vertex is
covered at most $O(n/\log n)$ times. 
The theorem is generalized to uniform
hypergraphs. Similar bounds with smaller constant value is provided for
fractional partitioning both for graphs and for uniform hypergraphs. We show
that these latter constants cannot be improved by more than a factor of 1.89
even for fractional covering by arbitrary complete multipartite subgraphs or
subhypergraphs.
In the case every vertex of the graph is connected to at least $n-m$ other
vertices, we prove the existence of a fractional covering of the edges by 
complete bipartite graphs such that every vertex is covered at most
$O(m/\log m)$ times, with only a slightly worse explicit constant. This
result also generalizes to uniform hypergraphs.
Our results give new improved bounds on the complexity of graph and
uniform hypergraph based secret sharing schemes, and show the limits of the
method at the same time.

\medskip
\keywords{ graph covering\and partition cover number\and
bipartite graph\and uniform hypergraph\and secret sharing.}
\subclass{ 05C99\and 05C65\and 05D40 \and 94A60}
\end{abstract}

\section{Introduction}\label{sec:intro}

While graph decomposition is an interesting topic by itself
\cite{alon,bezrukov,fishburn,hajuabolhassan,jukna,katona-szemeredi,pinto},
our interest comes mainly from a particular application in cryptography. Upper
bounds on the worst case complexity of secret sharing schemes often use
graph decomposition techniques \cite{beimel-etal,padro,kn:stinson}. In
this respect the most cited result is the Erd\"os-Pyber theorem
\cite{erdospyber}, which gives the estimate $O(n/\log n)$ on the vertex
cover number of a graph $G$ on $n$ vertices when the edges of $G$ are
partitioned into complete bipartite subgraphs of $G$. (In this paper
$\log$ denotes the base $2$ logarithm.) An immediate consequence is that for
any graph there is secret sharing scheme realizing that graph with
complexity $O(n/\log n)$ -- the best upper bound known for the share size in
general graphs.  Any improvement in the the Erd\H os-Pyber theorem implies
immediately a similar improvement in this bound. In this paper we give an alternate 
proof of the Erd\H os-Pyber 
theorem which yields an almost optimal explicit constant, and prove a
generalization for $d$-uniform hypergraphs, again with an almost optimal
explicit constant term.

Motivated by the question of what makes a graph ``hard'' for secret sharing
schemes, Beimel et al \cite{beimel-etal} considered very dense graphs, that
is, graphs where every node has high degree. In this paper we look at the
cover number of graphs on $n$ vertices where every vertex has degree at
least $n-m$. Our result implies that all of these graphs can be realized 
by secret sharing schemes with complexity $O(m/\log m)$ with an explicit
constant term. Setting $m=n$ we get yet another proof of the Erd\"os-Pyber
theorem, in this case the constant is only 1.443 times larger than the 
constant in our first proof.

Finally, in the other direction we show that our results are optimal within
a factor of 1.89. In particular, we show that for large enough $n$ there are 
graphs
($d$-uniform hypergraphs) on $n$ vertices where the fractional cover number
is more than 0.53 times our upper bound, even if arbitrary complete
multipartite graphs are allowed in the covering. This result implies that
the technique of covering $G$ with complete multipartite graphs cannot
improve the known upper bound of $O(n/\log n)$ on the complexity of $G$.

The paper is organized as follows. In Sect.~\ref{sec:results} we recall
some definitions and notations, and present our results. Our constructive
proof of the Erd\H os-Pyber theorem is given in Sect.~\ref{sec:graph}.
The generalization for $d$-uniform hypergraphs is proved in 
Sect.~\ref{sec:hypergraph}, Sect.\ref{sec:dense} deals with the case of
dense
graphs. Finally in Sect.~\ref{sec:this-is-best} we prove that our results
are the best possible up to a small constant multiplier.

\section{Preliminaries and results}\label{sec:results}

\subsection{Graph decomposition}

Let $\F$ be a collection of graphs. An $\F$-graph is a graph which is
isomorphic to an element of $\F$. An {\em $\F$-cover} of a graph $G=(V,E)$
is a collection of subgraphs of $G$ that are $\F$-graphs, and the union of 
whose
edge sets is $E$. An $\F$-cover is an {\em $\F$-partition} if the edge sets of
the subgraphs are pairwise disjoint. Given an $\F$-cover of $G$, the {\em
  load} of a vertex
$v\in V$ is the number of subgraphs containing it. The {\em vertex $\F$-cover
number} and the {\em vertex $\F$-partition number} of a graph $G$ is the
smallest $r$ such that for some $\F$-cover (or $\F$-partition, respectively)
of $G$ all vertex loads are at most $r$. We denote these numbers by
$\vc_{\F}(G)$, and
$\vp_{\F}(G)$, respectively, and the value is $+\infty$ when no such a cover
or partition exists. For more information on this notation and
a discussion of its relevance, see, e.g., \cite{knauer-ueckerdt}.

In the {\em fractional}, or weighted version, each $\F$-subgraph of a graph
$G=(V,E)$ has a
non-negative weight, and the total weight of the subgraphs containing an edge
$e\in E$ should be at
least 1 (cover), or exactly 1 (partition). In this case the load of a
vertex is the sum of the weights of the $\F$-subgraphs containing it, and the
corresponding {\em fractional vertex $\F$-cover number} and {\em fractional
vertex $\F$-partition number} are denoted by $\vcf_\F(G)$, and $\vpf_\F(G)$,
respectively. It is clear that
\begin{equation}\label{eq:basicineq}
   \vcf_\F(G) \le \vc_\F(G) \le \vp_\F(G),~\mbox{ and }~ 
   \vcf_\F(G)\le \vpf_\F(G) \le \vp_\F(G).
\end{equation}
The most frequently investigated case is when $\F$ is the collection of
complete bipartite graphs, denoted here by $\BP$. In the classical work of Fishburn and Hammer
\cite{fishburn} $\vc_\BP(G)$ is called the bipartite degree of $G$. Dong and Liu
showed in \cite{dong-liu} that $\vc_\BP(K_n)=\vp_\BP(K_n)=\lceil\log 
n\rceil$,\footnote{Note that $\log$ denotes base $2$ logarithm.} and that
$\vp_\BP(G)\le 4$ for planar graphs. Pinto \cite{pinto} calls $\vc_\BP(G)$ and
$\vp_\BP(G)$ the local biclique cover and partition number of $G$, respectively,  and shows
that there are graphs with $\vc_\BP(G)=2$ while $\vp_\BP(G)$ can be arbitrary
large. V.~Watts investigates fractional partitions and covers in \cite{watts}.

Other well studied graph families are the collection of stars, and the 
collection of cycles. We will use another important graph family, that of 
the {\em complete multipartite graphs}. These graphs are the complements of
disjoint unions of complete graphs and we denote their collection by $\MP$.

\subsection{$d$-uniform hypergraphs}

Let $d\ge 2$ be an integer. A {\em $d$-uniform hypergraph} $\HH$ is a pair
$(V,E)$, where $V$ is the set of vertices, and $E$ is the set of edges,
often called {\em hyperedges}, and each edge is a $d$-element subset of $V$.
A {\em   subhypergraph} of $\HH$ is a $d$-uniform hypergraph 
$(V',E')$ with $V'\subseteq V$ and $E'\subseteq E$.

A $d$-uniform hypergraph $\HH$ is a {\em complete $d$-uniform $k$-partite
hypergraph}, a {\em $(d,k)$-cuph} for short, if its vertex set can be
partitioned into $k$ parts such that the edge set consists of the subsets of
the vertices that intersect $d$ of the parts, each in a single vertex. We
call the parts of this partition the {\em partite sets} of $\cal H$.
When $k=d$ we call a $(d,d)$-cuph simply a $d$-cuph.

With a slight abuse of notation, the family of complete $d$-uniform multipartite
hypergraphs ($(d,k)$-cuphs) is also denoted by $\MP$, and the family of 
$d$-cuphs is denoted by $\BP$. In these definitions we consider $d$ to be
fixed but the class $\MP$ contain $(d,k)$-cuphs for arbitrary $k$.
Note that in the $d=2$ case we get back the the standard notion of 
complete multipartite and bipartite graphs, respectively.

\medskip

When $\F$ is a collection of $d$-uniform hypergraphs, the notion of
$\F$-cover and $\F$-partition as well as the load of a vertex generalizes
easily for $d$-uniform hypergraphs. The values $\vc_\F(\HH)$, 
$\vp_\F(\HH)$ and their fractional versions are defined similarly as has
been done for standard graphs. Inequalities in (\ref{eq:basicineq})
remain valid in this case.

\subsection{Secret sharing}

A secret sharing scheme, introduced in \cite{blakley,shamir} is a
probabilistic method by
which a dealer, who holds a secret, distributes shares to a set of
participants, so that only {\em authorized} subsets of the participants are
able to reconstruct the secret from their shares. The collection of all
authorized subsets is called the {\em access structure}. We only consider
{\em perfect schemes}, in which unauthorized subsets of participants should learn
nothing about the secret, that is, the collection of their shares should be
independent of the secret. Secret sharing schemes are considered as one of the
main building blocks in modern cryptography \cite{padro}. Most research on
secret sharing focuses on the ratio between the size of the largest share
and the size of the secret. Size is measured here by way of entropy. The
{\em complexity}, or information ratio of
an access structure is the infimum of this ratio over all schemes realizing
the structure. In this paper we consider access structures where all
minimal authorized subsets have the same size $d\ge 2$.  These access
structures can be described by $d$-uniform hypergraphs, where each vertex
represents a participant, and $d$ vertices form a hyperedge if
and only if the respective $d$-element set of participants is authorized. For such a hypergraph $\HH$,
the complexity of the access structure based on $\HH$ is denoted by
$\sigma(\HH)$. Hypergraphs with at least one edge have complexity
at least 1. Hypergraphs with complexity exactly 1 are called {\em ideal}. 
The complete $d$-uniform multipartite hypergraphs, that is  $(d,k)$-cuphs, 
are ideal
\cite{padro}. When $d=2$ all other non-trivial graphs have complexity at
least $3/2$ \cite{blundo-etal}, for $d\ge 3$ the characterization of ideal
$d$-uniform hypergraphs is an open problem.

Our interest in graph decomposition stems from Stinson's Decomposition
Theorem \cite{kn:stinson} which is an indispensable tool in giving
upper bounds on the complexity of access structures. While Stinson's 
theorem is more general, we state here in a special case.
\begin{thm}[Stinson \cite{kn:stinson}]\label{thm:stinson}
Let $\F$ be any collection of ideal $d$-uniform hypergraphs.
For any $d$-uniform hypergraph $\HH$ we have $\sigma(\HH) \le \vcf_\F(\HH)$. \qed
\end{thm}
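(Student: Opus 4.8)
The plan is to turn an optimal fractional $\F$-cover into an explicit secret sharing scheme realizing $\HH$ whose information ratio is at most the maximum vertex load. Fix a fractional $\F$-cover of $\HH=(V,E)$ by subhypergraphs $F_1,\dots,F_t$ (each isomorphic to an ideal member of $\F$, hence itself ideal) with weights $w_i\ge 0$, edge condition $\sum_{i:\,e\in F_i}w_i\ge 1$ for every $e\in E$, and maximal load $R=\max_{v}\sum_{i:\,v\in F_i}w_i$. Since $\HH$ is finite, only finitely many subhypergraphs occur, so the relevant linear program is finite and we may take the $w_i$ rational; clearing a common denominator $N$ we write $w_i=a_i/N$ with $a_i$ a non-negative integer, so that $\sum_{i:\,e\in F_i}a_i\ge N$ for every edge and $\sum_{i:\,v\in F_i}a_i\le RN$ for every vertex. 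I will let the secret be a uniformly random vector $s\in\mathbb{F}_q^N$ over a finite field $\mathbb{F}_q$ with $q$ large, so that the secret has size $N\log q$.

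For the construction I would attach to the $i$-th subhypergraph $a_i$ linear functionals $\ell_{i,1},\dots,\ell_{i,a_i}\colon\mathbb{F}_q^N\to\mathbb{F}_q$, chosen so that the full list of $\sum_i a_i$ functionals is in general position, i.e. any $N$ of them are linearly independent; a Reed--Solomon (Vandermonde) choice achieves this as soon as $q\ge\sum_i a_i$. Because each $F_i$ is ideal, it admits a scheme over secret domain $\mathbb{F}_q$ with all shares of size essentially $\log q$; run $a_i$ such schemes on $F_i$ independently, the $j$-th one sharing the field element $\ell_{i,j}(s)$. A participant's share is the list of the sub-shares it receives, so the load of $v$, namely $\sum_{i:\,v\in F_i}a_i\le RN$ sub-shares each of size $\log q$, bounds its total share by $RN\log q$, and the information ratio is at most $RN\log q/(N\log q)=R$. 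For reconstruction, an edge $e$ is authorized in each $F_i$ containing it, hence recovers the values $\ell_{i,j}(s)$ for all such $i$; these number $\sum_{i:\,e\in F_i}a_i\ge N$, and by the general-position choice some $N$ of the corresponding functionals are independent, so $e$ recovers $s$.

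The delicate point, and the step I expect to be the main obstacle, is \emph{perfect privacy}: a set $A\subseteq V$ containing no edge of $\HH$ must learn nothing about $s$, even though it sees sub-shares from many schemes that all encode correlated functionals of the \emph{same} secret. Here I would use that $A$ contains no edge of any $F_i$ either (the edges of $F_i$ are edges of $\HH$), so $A$ is unauthorized in every sub-scheme. Writing $\vec X=(\ell_{i,j}(s))_{i,j}$ for the vector of all shared functional values and $\vec V=(V_{i,j})$ for the tuple of views of $A$ in the individual schemes, perfect privacy of each ideal sub-scheme gives $\Prob[V_{i,j}=v\mid \ell_{i,j}(s)=x]=\Prob[V_{i,j}=v]$ for every $x$; since the schemes use mutually independent randomness, conditioning on $\vec X$ makes the $V_{i,j}$ independent, whence $\Prob[\vec V=\vec v\mid \vec X=\vec x]=\prod_{i,j}\Prob[V_{i,j}=v_{i,j}]$ does not depend on $\vec x$. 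Thus $\vec V$ is independent of $\vec X$; and as the functionals have rank $N$, the map $s\mapsto\vec X$ is injective, so $s$ is a deterministic function of $\vec X$ and $\vec V$ is independent of $s$, as required.

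Finally I would dispatch the routine technicalities: the field $q$ can be chosen as large as needed (larger $q$ only helps both the general-position requirement and the Reed--Solomon bound), and idealness yields schemes of ratio arbitrarily close to $1$, so the construction gives information ratio at most $R+\varepsilon$ for every $\varepsilon>0$; letting the rational cover approach the optimal fractional cover and $\varepsilon\to 0$ then yields $\sigma(\HH)\le \vcf_\F(\HH)$.
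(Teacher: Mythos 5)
The paper itself contains no proof of this theorem: it is quoted from Stinson \cite{kn:stinson} and closed with a proof-box as a citation, so there is no internal argument to compare yours against. Judged on its own merits, your construction is essentially the standard proof of Stinson's decomposition theorem, extended to fractional covers in the usual way: rationalize the optimal fractional cover (valid, since $\vcf_\F(\HH)$ is the optimum of a finite linear program with rational data), interpret weights $a_i/N$ as $a_i$ parallel repetitions, distribute Reed--Solomon pieces $\ell_{i,j}(s)$ of a secret $s\in\mathbb{F}_q^N$ through mutually independent ideal schemes on the $F_i$, and observe that an edge collects at least $N$ pieces (reconstruction, by the Vandermonde property), while a set containing no edge of $\HH$ is unauthorized in every sub-scheme, so its joint view is independent of $s$. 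Your conditional-independence computation for privacy is correct, and the load bookkeeping giving information ratio at most $R(1+\varepsilon)$ is fine.

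The one step you should not gloss over is ``each $F_i$ is ideal, hence admits a scheme over secret domain $\mathbb{F}_q$ with shares of size essentially $\log q$.'' Under the paper's definition, ideal means the \emph{infimum} of the information ratio equals $1$; that gives near-optimal schemes over \emph{some} secret domain with \emph{some} distribution, not over a field $\mathbb{F}_q$ of your choosing, and converting secret domains can degrade the ratio. This is precisely why Stinson's theorem is usually stated under the hypothesis that the members of the decomposition admit ideal schemes over a common domain $GF(q)$. For the way the theorem is used in this paper the gap is harmless: $\F$ consists of $(d,k)$-cuphs, which have exactly ideal linear schemes over every sufficiently large finite field (Shamir $d$-out-of-$k$ sharing applied to the partite sets), and plugging those into your construction closes the argument. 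But as a proof of the statement in its full stated generality---an arbitrary collection of ideal hypergraphs, with idealness defined via an infimum---that step is an unproved assumption, and you should either add it as a hypothesis (as Stinson does) or restrict the claim to families, like $\MP$, for which ideal schemes over all large fields are known.
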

As complete $d$-uniform multipartite graphs are ideal, the complexity of
the hypergraph $\HH$ can be upper bounded by $\vp_\MP(\HH)$.

\subsection{Our results}

In the asymptotic notation 
$O(\cdot)$ and $o(\cdot)$ we assume that $d$ is fixed and $n$ and in the case
of Theorem~\ref{uj} and Corollary~\ref{cor:complexity}, also $m$ tend to
infinity.

Erd\H os and Pyber proved in \cite{erdospyber} that for any (standard)
graph $G$ on $n$ vertices, the edge set of $G$ can be partitioned into
complete bipartite graphs so that every vertex of $G$ is contained in at most
$O(n/\log n)$ of the bipartite graphs. Using the notation introduced
above, their result
can be expressed equivalently as  $\vp_\BP(G)=O(n/\log n)$.
They also remarked that
this estimate is the best possible. We give a constructive proof of 
the Erd\H os-Pyber theorem with an improved explicit constant factor. 
\begin{thm}\label{thm:improved-EP}
If $G$ is a graph on $n$ vertices, then
$$
    \vp_\BP(G) \le \big(1+o(1)\big)\frac{n}{\log n},
$$
moreover 
$$
    \vpf_\BP(G) \le \big(0.5+o(1)\big)\frac{n}{\log n} .
$$
\end{thm}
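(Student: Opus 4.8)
The plan is to reduce the whole-graph problem to a one-sided \emph{bipartite covering lemma} and then sum a geometric series. Fix distinct binary labels of length $\lceil\log n\rceil$ on the vertices, padding so that at every prefix the two continuations are as balanced as possible. For each $k\ge 0$ and every prefix $w$ of length $k$, the vertices extending $w$ split into those whose next bit is $0$ and those whose next bit is $1$; let $H_w$ be the bipartite subgraph of $G$ consisting of the $G$-edges joining these two halves. Every edge of $G$ lies in exactly one $H_w$, namely for $w$ the common prefix of its endpoints up to their first differing coordinate, so it suffices to (fractionally) partition each $H_w$ into bicliques and add up the vertex loads level by level. Since each vertex lies in exactly one $H_w$ per level, its total load is the sum over $k$ of the load it receives at level $k$, where the two sides of the relevant bipartite graph each have size $s_k=n/2^{k+1}$.

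The key is therefore a \textbf{Bipartite Lemma}: a bipartite graph with $s$ vertices in each part admits a biclique partition of vertex load $(1+o(1))\,s/\log s$, and a fractional biclique partition of load $(0.5+o(1))\,s/\log s$. For the integer statement I would partition one part $Q$ into blocks of size $t$ and, inside each block $Q_j$, group the vertices $p$ of the other part $P$ by their trace $N(p)\cap Q_j$; each nonempty trace $T$ gives the complete biclique $(\{p: N(p)\cap Q_j=T\},\,T)$, and these partition the edges. A vertex of $P$ meets at most one biclique per block, hence has load at most $s/t$, while a vertex of $Q_j$ lies in at most $2^{t-1}$ bicliques. Choosing $t=\log s-\log\log s+O(1)$ balances the two bounds at $(1+o(1))\,s/\log s$. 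Substituting $s=s_k$ and summing, $\sum_{k\ge0}(n/2^{k+1})/\log(n/2^{k+1})=(1+o(1))(n/\log n)\sum_{k\ge0}2^{-(k+1)}=(1+o(1))\,n/\log n$, which gives $\vp_\BP(G)\le(1+o(1))\,n/\log n$; the tail where $s_k$ becomes polylogarithmic contributes only $o(n/\log n)$.

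The factor-$2$ gain in $\vpf_\BP(G)$ is the crux, and it reflects the gap between the block-and-trace \emph{partition} and what a weighted cover can achieve. In the extremal pseudorandom case the relevant bicliques already have $\Theta(\log s)$ vertices on each side and cover $\Theta(\log^2 s)$ edges, and a counting bound limits every biclique to at most $(2+o(1))\log s$ vertices per side, so the efficiency (edges covered per unit of vertex load) cannot exceed $\Theta(\log s)$; this pins the per-vertex load at order $s/\log s$ and is also the source of the near-matching lower bound. The fractional relaxation, freed from the rigid block structure and from integrality, lets me place weights on \emph{near-balanced} bicliques so that every edge is covered to total weight exactly $1$ at close to the optimal edge-per-load efficiency, which halves the constant to $(0.5+o(1))\,s/\log s$; feeding this into the same geometric sum yields $(0.5+o(1))\,n/\log n$.

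I expect the main obstacle to be precisely this fractional bipartite step: producing, for an \emph{arbitrary} (not merely pseudorandom) bipartite graph, a genuine fractional \emph{partition}—total weight exactly $1$ on each edge, with no large balanced biclique guaranteed on sparse or highly structured parts—whose load meets the $(0.5+o(1))\,s/\log s$ target. I would attack it by a probabilistic/weight-averaging construction and certify its efficiency against the counting bound above, then convert the resulting cover into a partition by trimming over-covered edges. The remaining effort is routine bookkeeping of the $o(1)$ terms: the label padding and balancing, the $\log(n/2^k)$ denominators that matter only for large $k$ where the $s_k$ are already tiny, and the polylogarithmic bottom of the recursion.
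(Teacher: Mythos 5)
Your integer half is essentially sound: the prefix decomposition into the bipartite graphs $H_w$ is a valid edge partition, your block-and-trace bipartite lemma is correct (it is the same neighborhood-fingerprint idea the paper uses, just run inside a bipartite reduction instead of directly on $G$), and the geometric summation, with the polylogarithmic tail handled separately, does give $\vp_\BP(G)\le(1+o(1))n/\log n$. This differs from the paper only in packaging: the paper avoids the recursion by orienting the edges, splitting $V$ into classes $H_1,\dots,H_{\lceil n/k\rceil}$ of size at most $k\approx\log n-2\log\log n$, and taking for each nonempty $S\subseteq H_i$ the biclique with parts $S$ and $T_S=\{v: N^+(v)\cap H_i=S\}$; each vertex then lies in $2^{k-1}=o(n/\log n)$ sets $S$ and in at most $\lceil n/k\rceil$ sets $T_S$, which gives the same bound in one shot.

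The fractional half, however, is a genuine gap, and you identify it yourself: you never construct the fractional partition of load $(0.5+o(1))\,s/\log s$; you only argue heuristically that weights on ``near-balanced bicliques at optimal efficiency'' should exist, and propose to trim a cover into a partition afterwards. Neither step is a proof. The efficiency/counting heuristic is a lower-bound-side consideration, not a construction; and trimming does not work for fractional partitions, because lowering the weight of a biclique to fix one over-covered edge changes the coverage of all its other edges (this is exactly why $\vcf_\BP$ and $\vpf_\BP$ are distinct quantities, and why the theorem's second claim is about the harder one). The idea your proposal is missing is much simpler: symmetrize. Drop the orientation and use full neighborhoods; for each class $H_i$ and each nonempty $S\subseteq H_i$ take the biclique with parts $S$ and $T'_S=\{v:N(v)\cap H_i=S\}$. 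Every edge $\{v,w\}$ with $v\in H_i$, $w\in H_j$ then lies in exactly two of these bicliques, namely $G'_{N(v)\cap H_j}$ and $G'_{N(w)\cap H_i}$, so assigning every biclique weight $1/2$ yields a genuine fractional partition, and every load is halved: at most $2^{k-2}+\lceil n/k\rceil/2=(0.5+o(1))\,n/\log n$. The same trick would complete your route (block both sides of each $H_w$, cover each edge once from each endpoint's trace, weight $1/2$), but as written the second statement of the theorem is unproven.
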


We prove a generalization of this theorem to $d$-uniform 
hypergraphs with higher values of $d$ as follows.

\begin{thm}\label{thm:d-uniform}
Let $d\ge 2$ be an integer, and $\HH$ be a $d$-uniform hypergraph on
$n$ vertices. In this case
$$
\vp_\BP(\HH) \le \big(\frac{1}{(d-2)!} +o(1)\big)\frac{n^{d-1}}{\log n} ,
$$
and
$$\vpf_\BP(\HH) \le \big(\frac{1}{d!}+o(1)\big)\frac{n^{d-1}}{\log n} .
$$
\end{thm}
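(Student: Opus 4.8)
The plan is to bootstrap from the graph case, Theorem~\ref{thm:improved-EP}, by singling out $d-2$ of the $d$ vertices of each hyperedge as ``coordinates'' and regarding the remaining two as the endpoints of an auxiliary graph. Fix a linear order on $V$. For a $(d-2)$-element set $S\subseteq V$ let $G_S$ be the graph whose edges are the pairs $\{x,y\}$ disjoint from $S$ with $S\cup\{x,y\}\in E(\HH)$. Assigning each hyperedge $e$ to the pair $(S,\{x,y\})$ in which $S$ consists of the $d-2$ smallest and $\{x,y\}$ of the two largest vertices of $e$, the graphs $G_S$, each restricted to the vertices above $\max S$, partition $E(\HH)$.

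First I would establish the integral bound. By Theorem~\ref{thm:improved-EP} each $G_S$ has a biclique partition in which every vertex has load at most $(1+o(1))n/\log n$. Turn each biclique $(A,B)$ of $G_S$ into a $d$-cuph by adjoining the $d-2$ singleton parts $\{s\}$, $s\in S$: its hyperedges are exactly the sets $S\cup\{a,b\}$ with $a\in A$, $b\in B$, all lying in $E(\HH)$, so these $d$-cuphs form a $\BP$-partition of $\HH$. A vertex $v$ picks up load as an ordinary vertex of at most ${n-1 \choose d-2}=(1+o(1))n^{d-2}/(d-2)!$ graphs $G_S$, contributing at most $(1/(d-2)!+o(1))\,n^{d-1}/\log n$; its remaining load comes from the sets $S$ containing $v$, treated below.

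For the fractional bound I would symmetrize over the ${d \choose 2}$ choices of the free pair: drop the order restriction, take $G_S$ to be the full ``link'' graph of $S$ on $V\setminus S$, cover each $G_S$ by the fractional biclique partition of Theorem~\ref{thm:improved-EP} (vertex load $\le(0.5+o(1))n/\log n$), cone with $S$ as before, and multiply every weight by $1/{d \choose 2}$. Each hyperedge then receives total weight $1$, so this is a fractional $\BP$-partition, and the ordinary-vertex load of $v$ becomes ${n-1 \choose d-2}\cdot\frac{1}{{d \choose 2}}\cdot(0.5+o(1))\frac{n}{\log n}=(1/d!+o(1))\,n^{d-1}/\log n$, using $\frac1{(d-2)!}\cdot\frac2{d(d-1)}\cdot\frac12=\frac1{d!}$.

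The main obstacle in both cases is to show that the ``coordinate'' load is genuinely of lower order. This does not follow from Theorem~\ref{thm:improved-EP} as stated: a vertex lying in a coordinate set $S$ belongs to \emph{every} $d$-cuph built from $G_S$, so its coordinate load is controlled by the number (respectively the total weight) of bicliques used, not by their per-vertex load, and the trivial ``one biclique per edge'' estimate is far too weak here. The delicate step I expect is therefore to read off from the proof of Theorem~\ref{thm:improved-EP} the additional fact that its partition of an $N$-vertex graph uses only $O(N^2/\log^3 N)$ bicliques (of total weight $O(N^2/\log^3 N)$ in the fractional case). Granting this, the coordinate load of any vertex is at most ${n-1 \choose d-3}\cdot O(n^2/\log^3 n)=O(n^{d-1}/\log^3 n)=o(n^{d-1}/\log n)$, which is absorbed into the $o(1)$ terms; collecting the ordinary and coordinate contributions then yields both displayed inequalities.
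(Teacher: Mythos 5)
Your proposal is correct and matches the paper's proof essentially step for step: the paper likewise splits each hyperedge into a $(d-2)$-set plus a pair (an arbitrary such split for the integral bound; symmetrized over all ${d\choose 2}$ pairs with weights scaled by $1/{d\choose 2}$ for the fractional bound), applies the graph-case result to the resulting link graphs, and cones each biclique with the $d-2$ singletons. The $O(n^2/\log^3 n)$ bound on the number (and hence total weight) of bicliques that you flag as the delicate missing ingredient is exactly why the paper states its graph result as Lemma~\ref{k}, which records that count explicitly alongside the load bound.
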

\medskip

In case a graph $G$ is dense and each vertex is connected to almost all other
vertices, the bound on $\vcf_\BP(G)$ implied by Theorem~\ref{thm:improved-EP} can
be strengthened. Similar strengthening works for dense
hypergraphs. Note that we
can choose $m=n$ to get a result for arbitrary graphs or hypergraphs, 
and even in this case the bounds are only slightly worse than the ones
implied by Theorems~\ref{thm:improved-EP} and \ref{thm:d-uniform}.

\begin{thm}\label{uj}
If $G$ is a graph on $n$ vertices such that every vertex has
degree at least $n-m$, then we have
$$\vcf_\BP(G) \le(0.722+o(1))\frac m{\log m}.$$
If the $d$-uniform hypergraph $\HH$ on $n$ vertices satisfies that every set
of $d-1$ vertices that appears together in an edge appears in at least $n-m$
edges, then
$$\vcf_\BP(\HH)\le\left(\frac{1.443}{d!}+o(1)\right)\frac{n^{d-2}m}{\log m}.$$
\end{thm}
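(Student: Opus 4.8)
The plan is to reduce the hypergraph statement to the graph statement and to prove the latter by an explicit probabilistic fractional cover. For the reduction, fix a $(d-2)$-element set $S\subseteq V$ and form the \emph{link graph} $G_S$ on $V\setminus S$ in which $u,w$ are adjacent iff $S\cup\{u,w\}$ is an edge of $\HH$. The density hypothesis guarantees that every non-isolated vertex of $G_S$ has degree at least $n-m$, so the graph result applies to $G_S$. Any biclique $A\times C$ of $G_S$ lifts to the $d$-cuph whose partite sets are the singletons $\{s\}$ ($s\in S$) together with $A$ and $C$; its hyperedges $\{s_1,\dots,s_{d-2},a,c\}$ are genuine edges of $\HH$. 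Superposing the covers of all the $G_S$ (each scaled by $1/{d\choose2}$, since every hyperedge lies in $G_S$ for exactly ${d\choose2}$ of its $(d-2)$-subsets) yields a fractional $\BP$-cover of $\HH$. The load a fixed vertex $v$ receives from the sets $S$ with $v\notin S$ is ${n-1\choose d-2}\big/{d\choose2}$ times the graph bound $0.722\,m/\log m$, which is precisely $\big(1.443/d!+o(1)\big)n^{d-2}m/\log m$; the contribution from the $S\ni v$ (where $v$ is a singleton part and thus meets every lifted cuph of $G_S$) is bounded by ${n-1\choose d-3}\big/{d\choose2}$ times the total weight of a single cover of $G_S$, which I will check is $o\big(n^{d-2}m/\log m\big)$ whenever $m=O(n)$.

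So the crux is the graph case. Write $z_v$ for the number of non-neighbours of $v$, so $z_v<m$. The basic building block is the biclique $B_A=(A,T_A)$, where $A$ is a random ``seed'' set obtained by including each vertex independently with probability $p$ and $T_A=\bigcap_{a\in A}N(a)$ is its common neighbourhood; this is automatically a complete bipartite subgraph of $G$. Assigning weight $\lambda\,d\mu$ to $B_A$, where $\mu$ is the product measure, turns the family into a fractional cover whose edge-coverages and vertex-loads are the corresponding $\mu$-probabilities scaled by $\lambda$. For an edge $uw$ one computes $\Prob(uw\in B_A)=p\big((1-p)^{1+z_w}+(1-p)^{1+z_u}\big)$, while $\Prob(v\in B_A)=p+(1-p)^{1+z_v}$. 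When all defects equal $D$ the governing ratio of maximal load to minimal coverage is
\[
\frac{p+e^{-pD}}{2pe^{-pD}}=\frac{e^{pD}}{2}+\frac{1}{2p},
\]
and setting $pD=x$ with $x^2e^{x}=D$, i.e. $x=(1+o(1))\ln D$, makes the second term $D/(2x)=(1+o(1))D/(2\ln D)$ dominant. With $D=m$ this is exactly $\big(1/(2\ln2)\big)m/\log m=(0.722+o(1))m/\log m$, matching the claim.

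The main obstacle is that the defects are \emph{not} uniform: a low-defect vertex lies in the common neighbourhood $T_A$ of almost every seed, so with a single global experiment it would accumulate an enormous load (already $K_n$, where every $z_v=0$, breaks the naive bound). I will resolve this by (i) partitioning the vertices into defect classes $W_D=\{v:\,D\le z_v<(1+\varepsilon)D\}$ along a geometric scale with $\varepsilon=o(1/\log m)$; (ii) running, for each class, an independent experiment with $p=p_D\approx(\ln D)/D$ whose biclique is $(A,\,T_A\cap W_D)$, so that only vertices of comparable defect ever sit on the large side; and (iii) splitting each edge $uw$ into two halves, the half indexed by $z_u$ being covered by the route $w\in A$, $u\in T_A\cap W_{z_u}$, and symmetrically for $z_w$. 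Each class then needs only to cover its edge-halves through the single route in which the class-member occupies the large side, and the factor $2$ in the denominator above is recovered because the two halves of a balanced edge are both served inside one class. A vertex of defect $D$ receives its dominant load $(1+o(1))D/(2\ln D)$ from the large side of its own class; its large-side load in other classes is exponentially suppressed, and its seed loads across all classes sum geometrically to $O\big(m/\log^2 m\big)$, which is lower order. Hence every load is at most $(1+o(1))\,m/(2\ln m)$, the binding case being $D=m$. The technical heart will be the bookkeeping in (iii), together with the verification that the sub-exponential error terms and the finite-$\varepsilon$ losses are all absorbed into the $o(1)$.
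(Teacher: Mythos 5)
Your hypergraph-to-graph reduction (link graphs $G_S$, lifting bicliques to $d$-cuphs, scaling by $1/{d\choose 2}$) is exactly the paper's, and your single-experiment formulas $\Prob(uw\in B_A)=p\big((1-p)^{1+z_w}+(1-p)^{1+z_u}\big)$ and $\Prob(v\in A\cup T_A)=p+(1-p)^{1+z_v}$ are correct; you also correctly identify the key obstacle, namely that low-defect vertices lie in almost every common neighbourhood. The genuine gap is in your remedy, specifically the bookkeeping claim of step (iii). In the experiment for class $W_D$ the seed $A$ must be sampled from all of $V$ (the half-edge of $uw$ assigned to $u\in W_D$ needs $w\in A$ for an arbitrary neighbour $w$ of $u$), and covering the worst half-edge of the class forces the normalization $\lambda_D$ to satisfy $\lambda_D p_D(1-p_D)^{1+z_u}\ge 1/2$ with $z_u$ as large as $(1+\varepsilon)D$, i.e.\ $\lambda_D p_D\gtrsim\frac12(1-p_D)^{-(1+\varepsilon)D}\approx\frac12 e^{x_D}=\frac{D}{2x_D^2}\approx\frac{D}{2\ln^2D}$ by your own relation $x^2e^x=D$. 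Hence \emph{every} vertex of $G$, whatever its own defect, receives seed load about $D/(2\ln^2D)$ from \emph{every} nonempty class. Your classes form a geometric progression with ratio $1+\varepsilon$, so in the worst case (defects present at every scale) there are about $\varepsilon^{-1}\ln m$ of them, and consecutive per-class seed loads differ only by the factor $\approx 1+\varepsilon$; the sum is therefore not dominated by its largest term as ``geometric'' suggests, but equals roughly
$$\sum_{j\ge0}\frac{m(1+\varepsilon)^{-j}}{2\ln^2\big(m(1+\varepsilon)^{-j}\big)}\;\approx\;\frac{m}{2\varepsilon\ln^2m},$$
which for your choice $\varepsilon=o(1/\log m)$ is $\omega(m/\log m)$, not $O(m/\log^2 m)$: it swamps the main term. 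Worse, the two demands on $\varepsilon$ conflict irreparably: preserving the constant $1/(2\ln2)$ requires the within-class loss $(1-p_D)^{-\varepsilon D}\approx e^{\varepsilon\ln D}$ to be $1+o(1)$, i.e.\ $\varepsilon\log m\to0$, while keeping the total seed load of lower order requires $\varepsilon\log m\to\infty$. With the compromise $\varepsilon=\Theta(1/\log m)$ the scheme still gives $O(m/\log m)$, but with a strictly worse constant, so the claimed $0.722$ is not reached this way.

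The paper overcomes the very same obstacle inside a single global experiment by a per-vertex thinning rather than by defect classes: after choosing the seed $A$ and its common neighbourhood $B_0$, each $v\in B_0$ is kept in the large side $B$ independently with probability $(1-p)^{d_v-n+m}=(1-p)^{m-1-z_v}$. This equalizes $\Prob(v\in B)=(1-p)^m$ exactly for all $v$, so every edge is covered with probability exactly $2p(1-p)^m$ (yielding a genuine fractional \emph{partition}, not merely a cover), every vertex load equals $\frac12\big(p^{-1}+(1-p)^{-m}\big)$, and the choice $p^{-1}=m\log e/(\log m-2\log\log m)$ gives the constant $\frac12\log e<0.722$; the total weight $1/(2p(1-p)^m)=O(m^2/\log^2m)$ then also supplies the bound you still owed for the sets $S\ni v$ in the hypergraph reduction. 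If you replace your steps (i)--(iii) by this thinning device, the rest of your write-up goes through and coincides with the paper's proof.
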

\medskip

Our results can be applied to get universal bounds on the complexity of graph
and uniform hypergraph based structures.

\begin{cor}\label{cor:complexity}
For any graph $G$ on $n$ vertices, $\displaystyle\sigma(G)\le (1/2+o(1))\frac{n}{\log n}$.
If the graph $G$ has minimum degree $n-m$, then
$\displaystyle\sigma(G)\le(0.722+o(1))\frac{m}{\log m}$.

For any d-uniform hypergraph $\HH$ on $n$ vertices, $\displaystyle\sigma(\HH)\le
(\frac{1}{d!}+o(1))\frac{n^{d-1}}{\log n}$.
If every set of $d-1$ vertices in $\HH$ that appears in a hyperedge appears in
at least $n-m$ of them, then we also have
$\displaystyle\sigma(\HH)\le(\frac{1.443}{d!}+o(1))\frac{n^{d-2}m}{\log m}$.
\end{cor}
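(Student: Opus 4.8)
The plan is to derive each of the four inequalities directly from Stinson's Decomposition Theorem (Theorem~\ref{thm:stinson}) applied with $\F=\BP$, combined with the covering bounds already established in Theorems~\ref{thm:improved-EP}, \ref{thm:d-uniform} and \ref{uj}. The first step is to observe that the family $\BP$ of $d$-cuphs consists of ideal hypergraphs: a $d$-cuph is a $(d,d)$-cuph, hence a complete $d$-uniform multipartite hypergraph, and such hypergraphs are ideal. Consequently Stinson's theorem yields $\sigma(\HH)\le\vcf_\BP(\HH)$ for every $d$-uniform hypergraph $\HH$, and in particular $\sigma(G)\le\vcf_\BP(G)$ for every graph $G$ (the case $d=2$).

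Next I would route the general (non-dense) bounds through the basic inequality chain (\ref{eq:basicineq}), namely $\vcf_\BP\le\vpf_\BP$, so that the fractional partition estimates can be used directly. For an arbitrary graph $G$ on $n$ vertices, Theorem~\ref{thm:improved-EP} gives $\vpf_\BP(G)\le(0.5+o(1))n/\log n$, whence $\sigma(G)\le\vcf_\BP(G)\le\vpf_\BP(G)\le(1/2+o(1))n/\log n$. The analogous hypergraph bound $\sigma(\HH)\le(\frac{1}{d!}+o(1))n^{d-1}/\log n$ follows identically from the fractional partition bound of Theorem~\ref{thm:d-uniform}.

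For the two dense cases there is no need to pass through a partition bound, since Theorem~\ref{uj} already supplies fractional cover estimates. When $G$ has minimum degree $n-m$ one reads off $\vcf_\BP(G)\le(0.722+o(1))m/\log m$ and applies Stinson's theorem to obtain the stated bound on $\sigma(G)$; the hypergraph version, under the stated degree condition on the $(d-1)$-sets, follows in the same way from the second inequality of Theorem~\ref{uj}. Since every step is a substitution into a previously established inequality, the only point that genuinely requires checking is that $\BP\subseteq\MP$ together with the idealness of complete multipartite hypergraphs; once that is in hand the corollary is immediate, and the real content lies entirely in the preceding theorems rather than in this deduction.
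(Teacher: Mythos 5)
Your proposal is correct and matches the paper's (implicit) derivation exactly: the corollary is obtained by applying Stinson's theorem with $\F=\BP$ (legitimate since $d$-cuphs are complete $d$-uniform multipartite hypergraphs, which the paper states are ideal), then chaining $\sigma\le\vcf_\BP\le\vpf_\BP$ with the fractional partition bounds of Theorems~\ref{thm:improved-EP} and \ref{thm:d-uniform} for the general cases, and using the fractional cover bounds of Theorem~\ref{uj} directly for the dense cases. The paper leaves this routine deduction unstated, and your write-up supplies precisely the missing steps.
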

\medskip

From the other direction we show that the fractional
results in Theorems~\ref{thm:improved-EP} and \ref{thm:d-uniform} cannot be
improved by more than a factor of $1.89$
even if we consider fractional covering instead of fractional partition and
arbitrary complete multipartite hypergraphs instead of $d$-cuphs.

\begin{thm}\label{thm:upper-bound}
For every $d\ge 2$ and $n\ge n_0(d)$ there is a
$d$-uniform hypergraph $\HH$ on $n$ vertices such that
$$ \vcf_\MP(\HH) \ge \frac{0.53}{d!}\cdot\frac{n^{d-1}}{\log n}.
$$
\end{thm}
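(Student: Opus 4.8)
The plan is to establish a general lower bound on $\vcf_\MP(\HH)$ in terms of the edge density of the densest complete multipartite subhypergraph, and then to exhibit a random hypergraph for which this density is small while the total number of edges stays large. So first I would prove the following deterministic inequality. For a $d$-uniform hypergraph $\HH$ on $n$ vertices set
$$ M = \max\Big\{ \tfrac{|E(H)|}{|V(H)|} : H \text{ is a complete multipartite subhypergraph of } \HH \Big\}. $$
Then $\vcf_\MP(\HH) \ge |E(\HH)|/(nM)$. Indeed, take any fractional $\MP$-cover with weights $w_H$ and maximum load $r=\vcf_\MP(\HH)$. Counting the total load in two ways gives $\sum_v \load(v) = \sum_H w_H |V(H)| \le rn$, while the bound $|V(H)| \ge |E(H)|/M$ and the covering constraint $\sum_{H\ni e} w_H \ge 1$ yield $\sum_H w_H |V(H)| \ge M^{-1}\sum_H w_H|E(H)| = M^{-1}\sum_{e\in E(\HH)}\sum_{H\ni e} w_H \ge M^{-1}|E(\HH)|$. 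Comparing the two estimates proves the claim. This reduces the theorem to finding a hypergraph with many edges in which \emph{every} complete multipartite subhypergraph is sparse, i.e. $M=O(\log n)$.

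For the construction I would take the random $d$-uniform hypergraph $\HH$ on $n$ vertices in which each of the $\binom nd$ possible edges is included independently with probability $p$, where $p\in(0,1)$ is a constant to be optimized at the very end. A Chernoff bound shows that $|E(\HH)| \ge (1-o(1))\,p\binom nd = (1-o(1))\tfrac{p}{d!}\,n^{d}$ holds with high probability, so the remaining task is to bound $M$ from above with high probability.

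This is the crux of the argument, and the main obstacle is that the number of partite classes of a complete multipartite subhypergraph is unbounded, so the relevant union bound ranges over a very rich family. I would control it by a first moment computation organized by the number $s=|V(H)|$ of vertices used. A complete multipartite subhypergraph on a fixed $s$-set is determined by the partition of that set into its partite classes, so the number of candidates with $|V(H)|=s$ is at most $\binom ns B_s \le (en/s)^s s^s = (en)^s$, where $B_s\le s^s$ is the Bell number. A fixed such $H$ with $e=|E(H)|$ edges is a subhypergraph of $\HH$ with probability $p^{e}=2^{-e\log(1/p)}$. Call $H$ \emph{bad} if $e>(1+\varepsilon)\tfrac{\log n}{\log(1/p)}\,s$; then $p^{e}<2^{-(1+\varepsilon)s\log n}$, and the expected number of bad subhypergraphs is at most
$$ \sum_{s\ge d} (en)^{s}\,2^{-(1+\varepsilon)s\log n} = \sum_{s\ge d} 2^{\,s(\log e-\varepsilon\log n)}, $$
which for large $n$ is a geometric series with ratio tending to $0$, hence $o(1)$. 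By Markov's inequality, with high probability no bad subhypergraph exists, that is $M\le(1+\varepsilon)\tfrac{\log n}{\log(1/p)}$.

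Finally I would combine the two bounds: with high probability
$$ \vcf_\MP(\HH) \ge \frac{|E(\HH)|}{nM} \ge (1-o(1))\,\frac{p\log(1/p)}{(1+\varepsilon)\,d!}\cdot\frac{n^{d-1}}{\log n}. $$
The factor $p\log(1/p)$ is maximized at $p=1/e$, where it equals $1/(e\ln 2)=0.5307\ldots>0.53$. Choosing $p=1/e$ and $\varepsilon$ a sufficiently small constant, the right-hand side exceeds $\tfrac{0.53}{d!}\cdot\tfrac{n^{d-1}}{\log n}$ once $n\ge n_0(d)$; since this event has positive probability, a hypergraph $\HH$ with the required property exists. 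I expect the delicate part to be the union bound of the third paragraph, where the clean cancellation in $(en)^s\,2^{-(1+\varepsilon)s\log n}$ is exactly what tames the unbounded number of partite classes.
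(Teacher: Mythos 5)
Your proposal is correct and follows essentially the same route as the paper: the same random hypergraph $\mathcal H^d(n,p)$ with $p=1/e$, the same double-counting of loads to reduce the problem to bounding the density of complete multipartite subhypergraphs, and the same first-moment union bound to show this density is $O(\log n)$ with high probability (the paper matches your Lemma-level argument in its Lemma~5 and Theorem~6). The only cosmetic differences are that you bound the number of vertex partitions by the Bell number $B_s\le s^s$ and absorb the slack with a $(1+\varepsilon)$ factor in the density threshold, whereas the paper sums over the number of parts $k$ and uses the exact threshold $-\log n/\log p$; both computations are valid.
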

This theorem indicates that new and different ideas are required to
improve the general upper bound on the
complexity of graphs and hypergraphs given by Corollary \ref{cor:complexity}.

\section{Graphs - proof of the Erd\H os-Pyber theorem}\label{sec:graph}

With the choice $k=\lceil\log n-2\log\log n\rceil$ the following lemma implies
Theorem~\ref{thm:improved-EP}. We formulate this lemma because for the
generalizations for hypergraphs we will need its bound on the total 
number of subgraphs used in the partition. For the same choice of $k$ it is
$O(n^2/\log^3n)$.

\begin{lemma}\label k
Let $G$ be a graph on $n$ vertices and let $1\le k\le n$. There exists a
$\BP$-partition of $G$ involving less than $2^kn/k$ complete bipartite
subgraphs such that load of every vertex is at most $2^{k-1}+\lceil n/k\rceil$.

Furthermore, there exists a fractional $\BP$-partition of $G$ involving less
than $2^kn/k$ complete bipartite subgraphs, each with weight $1/2$ or $1$, such
that the load of every vertex is at most $2^{k-2}+\lceil n/k\rceil/2$.
\end{lemma}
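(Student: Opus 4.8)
The plan is to group the $n$ vertices into $q:=\lceil n/k\rceil$ parts $P_1,\dots,P_q$, each of size at most $k$, and to build every bipartite piece from \emph{neighbourhood types}. For a part $P$ and a vertex $w$, call $N(w)\cap P\subseteq P$ the \emph{type} of $w$ in $P$. For a fixed type $T\subseteq P$ and the set $W_T$ of all vertices of that type, the pair $(T,W_T)$ is a complete bipartite subgraph of $G$, since by definition every vertex of $W_T$ is joined to every vertex of $T$. These ``type graphs'' are the only members of $\BP$ I intend to use, so the whole argument reduces to counting how many of them contain a given vertex, either on the part side $T$ or on the type side $W_T$.

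For the integer partition I would fix a linear order on $V$ refining the partition into the $P_i$, and charge every edge $\{u,w\}$ (with $u<w$) to the part containing its smaller endpoint $u$. When processing a part $P_i$, I assign to each $w$ the restricted type $T(w)=N(w)\cap\{u\in P_i:u<w\}$ and take the graph $(T(w),W_{T(w)})$; this covers exactly the edges charged to $P_i$, each precisely once, so the type graphs over all parts form a $\BP$-partition. A vertex $v\in P_i$ lies on the part side $T$ only while its own part is processed, and only for types $T\ni v$; there are at most $2^{k-1}$ subsets of $P_i$ through $v$, so its part-side load is at most $2^{k-1}$. It lies on the type side at most once for each part $P_j$ with $j\le i$, giving type-side load at most $\lceil n/k\rceil$. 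Summing gives load at most $2^{k-1}+\lceil n/k\rceil$, and since each part produces at most one graph per non-empty type, fewer than $2^k$ graphs arise per part and fewer than $2^kn/k$ in total.

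For the fractional partition the single new idea is to \emph{share each edge equally between the two parts containing its endpoints}. I would process every part $P_i$ once, now assigning to each $w$ (inside or outside $P_i$) its full type $N(w)\cap P_i$ and giving every type graph weight $1/2$. Each edge $\{u,w\}$ is then covered with weight $1/2$ when the part of $u$ is processed and with weight $1/2$ when the part of $w$ is processed, hence with total weight exactly $1$. Now a vertex $v$ sits on the part side only for its own part, for at most $2^{k-1}$ types but with weight $1/2$ each, so its part-side load is at most $2^{k-2}$; and it sits on the type side at most once for each of the $\lceil n/k\rceil$ parts, again with weight $1/2$, for a type-side load of at most $\lceil n/k\rceil/2$. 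This yields the bound $2^{k-2}+\lceil n/k\rceil/2$ with every weight equal to $1/2$, and the same count of fewer than $2^kn/k$ graphs.

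The conceptual content is that the part side and the type side play symmetric roles, so halving the weight halves both contributions at once; this is exactly what separates the fractional bound from the integer one, where an edge must be charged wholly to a single endpoint. The step I expect to need the most care is the bookkeeping that makes both constructions genuine \emph{partitions}: checking, via the restricted type $T(w)$, that same-part edges are folded into the type graphs with no edge covered twice, and, fractionally, that the two half-weight copies of each edge really come from distinct parts and sum to exactly $1$. Verifying the strict count $2^kn/k$, which is the part of the statement needed for the hypergraph generalisation, is routine but must still be carried out.
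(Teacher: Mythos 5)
Correct, and essentially identical to the paper's proof: you partition the vertices into $\lceil n/k\rceil$ classes of size at most $k$ and cover the edges by complete bipartite ``type graphs'' $\big(S,\{w: N(w)\cap P_i=S\}\big)$, with your linear order playing the role of the paper's arbitrary edge orientation in the integer case, and full neighbourhoods with weight $1/2$ in the fractional case, exactly as the paper does. The load accounting also matches the paper's ($2^{k-1}$, resp.\ $2^{k-2}$, graphs through a vertex on the class side, plus $\lceil n/k\rceil$, resp.\ $\lceil n/k\rceil/2$, on the type side); the only cosmetic difference is that the paper merges the occasional pair of coinciding type graphs (same subgraph with partite sets reversed) into a single graph of weight $1$, which is why the lemma mentions weight $1$ at all, whereas you keep them as two weight-$1/2$ copies.
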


\begin{proof}
Let us orient each edge $e$ of $G$ arbitrarily, so now one of its
vertices is $h(e)$, the {\em head} of $e$, while the other is the {\em tail}
$t(e)$ of $e$. We write $N^+(v)$ for the set of {\em outneighbors} of the
vertex $v$, i.e., $N^+(v)=\{h(e):e\in E,t(e)=v\}$. Let us
partition the vertex set into classes $H_1,\dots,H_{\lceil n/k\rceil}$ in
such a way that each class has at most $k$ elements. For a nonempty subset
$S\subseteq H_i$ of a class $H_i$ we consider the complete bipartite graph
$G_S$ whose two partite sets are $S$ and $T_S=\{v\in V:N^+(v)\cap H_i=S\}$.
Figure~\ref{graph_part_fig} illustrates an example of such graph.
The graphs $G_S$ are clearly subgraphs of $G$, their number is less than
$2^kn/k$ as claimed and their edge sets partition the edge set
$E$ as $e\in E$ appears in the unique subgraph $G_S$, where $H_i$ is the class
containing $h(e)$ and $S=N^+(t(e))\cap H_i$. Furthermore a vertex $v\in H_i$
appears in the $2^{|H_i|-1}\le2^{k-1}$ sets $S\subseteq H_i$ and further it
also appears in the partite set $T_S$ of $G_S$ for $S=N^+(v)\cap
H_j\ne\emptyset$, another at most $\lceil n/k\rceil$ graphs. This proves the
first claim of the theorem.

For the second claim we ignore the orientation and work with the full
neighborhood $N(v)=\{w\in V:\{v,w\}\in E\}$ of a vertex. We still use the
same partition of the vertex set. For a nonempty set $S\subseteq H_i$ we
define $G'_S$ to be the complete bipartite graph with partite sets $S$ and
$T'_S=\{v\in V:N(v)\cap H_i=S\}$. It is clear that these graphs are subgraphs of
$G$, their number is the same as the number of the graphs $G_S$, and every
edge $\{v,w\}\in E$ appears in exactly two of these graphs, namely if $v\in
H_i$ and $w\in H_j$, then $\{v,w\}$ appears in $G'_{N(v)\cap H_j}$ and in
$G'_{N(w)\cap H_i}$. Thus, assigning the weight $1/2$ to each of these graphs
we obtain a fractional $\BP$-partition of $G$. The weight $1$ will only show
up if two of the complete bipartite graphs are the same with the role of their
partite sets reversed. As before, a vertex $v\in H_i$ appears in at most
$2^{k-1}$ sets $S\subseteq H_i$ and at most $\lceil n/k\rceil$ sets $T'_S$,
where $S=N(v)\cap H_j\ne\emptyset$ for some $j$.
\end{proof}

\begin{figure}[h!t]
\begin{center}
\begin{tikzpicture}
\foreach\x in {1,2,...,6}{
 \foreach\y in {1,2,...,9}{\draw[fill](\x,0.3*\y) circle (2pt); }
 \draw(\x,1.5) ellipse(0.3 and 1.6);
}
\draw(2.83,1.35) rectangle +(0.335,0.9);
\foreach\y in{5,6,7}{
  \draw(3,0.3*\y)--(5,0.3*8);
  \draw(3,0.3*\y)--(2,2.7);
  \draw(3,0.3*\y) to[out=-145, in=145] (3,0.3);
}
\end{tikzpicture}
\caption{A $K_{3,3}$ subgraph with a 3-element $S$ from the third class}\label{graph_part_fig}\end{center}
\end{figure}
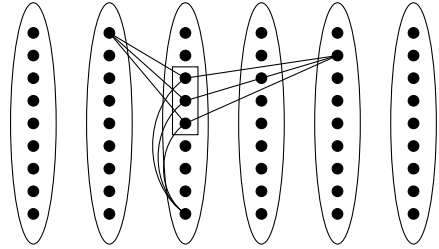

\section{Uniform hypergraphs}\label{sec:hypergraph}

In this section we prove Theorem~\ref{thm:d-uniform}. We note that the
out of the $d$ partite sets of the $d$-cuphs used in the $\BP$-partition or
fractional $\BP$-partition constructed in the proof at least $d-2$ are
singletons. The weight of any $d$-cuph in the fractional $\BP$-partition is a
multiple of $1/(d^2-d)$.

\begin{proof}[Proof of Theorem~\ref{thm:d-uniform}:] Let $\HH=(V,E)$.
We start with something similar to orientation: we partition
every edge $e\in E$ into two sets $A(e)$ and $B(e)$ with $|A(e)|=d-2$ and
$|B(e)|=2$. For $A\subseteq V$, $|A|=d-2$ we define a subset $E_A=\{e\in
E:A(e)=A\}$ of $E$ and a graph $G_A=(V,\{B(e):e\in E_A\})$. See Figure
\ref{hypergraph_fig} for an example in a 6-uniform hypergraph. Clearly,
the sets $E_A$ partition $E$. We apply the claim on $\BP$-partition in
Lemma~\ref k separately to each of the graphs $G_A$ using
$k=\lceil\log n-2\log\log n\rceil$. This yields a partition of the edge set of
$G_A$ into the edge sets of the subgraphs $G_{A,i}$. As calculated before the
statement of the lemma, for every $A$ we have $O(n^2/\log^3n)$ graphs
$G_{A,i}$ and the load of any vertex is at most $(1+o(1))n/\log n$.

\begin{figure}[h!t]
\begin{center}
\begin{tikzpicture}[scale=0.8]
\foreach\x in {1,2,3,4}{\draw (0.5+\x,4) circle (3pt); }
\foreach\x in {1,2,3}{\draw[fill](1+\x,2) circle(3pt); }
\draw(2,2)--(4,2);
\draw(1.2,0) circle (3pt) (2.2,0) circle (3pt);
\draw[dotted] (2.4,0)--(4.4,0);
\draw(4.6,0) circle (3pt);
\draw (0.5,4.3)--(5.5,4.3)--(2.3,0.8)--cycle;
\draw (0.7,4.6)--(6.5,4.6)--(3.5,0.8)--cycle;
\end{tikzpicture}
\caption{A 6-uniform hypergraph with $G_A=K_{2,1}$}\label{hypergraph_fig}\end{center}
\end{figure}
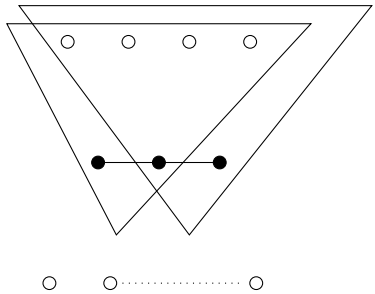

Let $S_{A,i}$ and
$T_{A,i}$ be the partite sets of $G_{A,i}$ and let us define $\mathcal
H_{A,i}$ to be the $d$-cuph with partite sets $S_{A,i}$, $T_{A,i}$ and the
$d-2$ singleton sets contained in $A$. For a fixed $A$ the edge sets of the
$d$-cuphs $\mathcal H_{A,i}$ partition $E_A$. Thus, all the hypergraphs
$\mathcal H_{A,i}$ give a $\BP$-partition of $\HH$.

Let us now fix a vertex $v\in V$ and estimate its load. The vertex $v$ appears
in all $\mathcal H_{A,i}$ with $v\in A$: this contributes $O(n^{d-3}\cdot
n^2/\log^3n)$ to the load. We further have $n-1\choose d-2$ sets $A$ that do
not contain $v$ and at most $(1+o(1))n/\log n$ graphs $G_{A,i}$ for each such
set $A$ in which $v$ appears making $v$ also appear as a vertex of $\mathcal
H_{A,i}$. This brings the total load on $v$ up to at most
$(1/(d-2)!+o(1))n^{d-1}/\log n$ proving the first statement of the theorem.

To obtain a good fractional $\BP$-partition of $\HH$ we
disregard the partition $e=A(e)\cup B(e)$ created earlier. For $A\subseteq V$
with $|A|=d-2$ we define $E'_A=\{e\in E:A\subseteq e\}$ and the graph
$G'_A=(V,\{e\setminus A:e\in E'_A\})$. Every edge $e\in E$ appears
in exactly $d\choose2$ of the sets $E'_A$. Now we apply the claim on
fractional $\BP$-covering in Lemma~\ref k for each of the graphs $G'_A$ with
the same choice of $k$ as before. We obtain a fractional $\BP$-partition of
$G'_A$ with the complete bipartite graphs $G'_{A,i}$ with weight $w_{A,i}$
such that the total number of these graphs is $O(n^2/\log^3n)$ and the maximum
load does not exceed $(1/2+o(1))n^2/\log n$. We define
the $d$-cuphs $\mathcal H'_{A,i}$ as before: its partite sets are the partite
sets of $G'_{A,i}$ plus the singleton sets contained in $A$. We set the weight
of $\mathcal H'_{A,i}$ to be $w_{A,i}/{d\choose2}$. This gives a fractional
$\BP$-partition of $\HH$. Calculating the maximum load of this fractional
$\BP$-partition as before finishes the proof of the theorem.
\end{proof}

\section{Very dense graphs}\label{sec:dense}
In this section we prove Theorem~\ref{uj}. S.\ Jukna in \cite[Theorem
1]{jukna} proves similar results for bipartite graphs with the difference that
he bounds the total number of covering bipartite graphs rather than the
maximum load.
The first part of Theorem~\ref{uj} will follow from the following lemma, 
which will also be used to bootstrap the proof for the $d$-uniform case. 

\begin{lemma}\label{lemma:dense}
Let $G$ be a graph on $n$ vertices such that every vertex has degree 
at least $n-m$. For each $0<p<1$ we have
\begin{equation}\label{eq:lemmadense}
    \vpf(G) \le \frac12\big(p^{-1} + (1-p)^{-m}\big).
\end{equation}
Moreover, the total weight of all graphs taking part in the fractional
partition is $1/(2p(1-p)^m)$.
\end{lemma}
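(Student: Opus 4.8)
The plan is to exhibit a single weighted family of complete bipartite subgraphs of $G$, produced by one random experiment and then uniformly rescaled, so that both the partition constraint and the load bound of (\ref{eq:lemmadense}) fall out of elementary probability estimates. The only hypothesis I will use is the sparsity of the complement: since every vertex has degree at least $n-m$, each vertex $v$ has a set $\bar N(v)$ of non-neighbors with $d_v:=|\bar N(v)|\le m-1$. A fractional $\BP$-partition is a non-negative weighting of the (finitely many) complete bipartite subgraphs summing to exactly $1$ on each edge, so it suffices to produce a distribution over random complete bipartite subgraphs together with a scaling factor $\lambda$; the weight of a subgraph $H$ is then $\lambda\,\Prob[\text{the random subgraph equals }H]$, its total weight is $\lambda$, and the load of $v$ is $\lambda\,\Prob[v\in\text{the random subgraph}]$.

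\textbf{The random subgraph.} First I would fix $0<p<1$ and let $W\subseteq V$ be obtained by placing each vertex in $W$ independently with probability $p$. Writing $Z(W)=\{v\notin W: W\subseteq N(v)\}$ for the common neighborhood of $W$, the pair $K(W,Z(W))$ is a complete bipartite subgraph of $G$: every $w\in W$ is adjacent to every $v\in Z(W)$ by definition, and $W\cap Z(W)=\emptyset$. An edge $\{u,v\}$ lies in $K(W,Z(W))$ exactly when one endpoint is in $W$ and the other is adjacent to all of $W$, and the two ways of achieving this, with hub $u$ or with hub $v$, are disjoint events; each will contribute a $\tfrac12$ after rescaling, which is the origin of the factor $\tfrac12$ in the statement. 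A direct computation using independence (for an edge $\{u,v\}$ the vertex $u$ lies outside $\{v\}\cup\bar N(v)$) gives coverage probability $p\big((1-p)^{1+d_u}+(1-p)^{1+d_v}\big)$, and $\Prob[v\in Z(W)]=(1-p)^{1+d_v}$.

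\textbf{Equalization and the counts.} The key step is that these quantities depend on the individual non-degrees $d_v$ and generally exceed the target, so the raw construction gives only a \emph{cover} with degree-dependent load. To enforce an exact partition I would thin $Z(W)$: independently retain each $v\in Z(W)$ with probability $q_v=(1-p)^{\,m-1-d_v}$, which lies in $(0,1]$ precisely because $d_v\le m-1$. Writing $Z'(W)$ for the retained set, the subgraph $K(W,Z'(W))$ is still complete bipartite, and now $\Prob[v\in Z'(W)]=(1-p)^{1+d_v}q_v=(1-p)^m$ for every $v$, so the coverage probability of every edge collapses to exactly $2p(1-p)^m$. Choosing $\lambda=1/\big(2p(1-p)^m\big)$ then makes each edge's total weight exactly $1$, gives total weight $\lambda=1/\big(2p(1-p)^m\big)$, and makes every vertex load equal to $\lambda\big(p+(1-p)^m\big)=\tfrac12\big(p^{-1}+(1-p)^{-m}\big)$, matching the lemma exactly.

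\textbf{Main obstacle.} The one genuinely delicate point, and the place where I expect to spend the most care, is exactly this equalization: the natural common-neighborhood construction over-covers edges whose endpoints have few non-neighbors and therefore yields an inflated, non-uniform load, so it proves only $\vcf_\BP(G)$ and not the partition statement. The thinning by $q_v$ is the device that simultaneously turns the cover into a partition and flattens all vertex loads to the single value in (\ref{eq:lemmadense}); verifying that the retention is legitimately independent of $W$ and that $q_v\le 1$ exactly under the hypothesis $\deg(v)\ge n-m$ is the crux of the argument. The final optimization over $p$ (taking $p$ of order $\log m/m$) that converts the lemma into the $0.722\,m/\log m$ bound of Theorem~\ref{uj} is then a routine calculus exercise.
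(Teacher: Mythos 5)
Your proposal is correct and takes essentially the same approach as the paper's proof: the same two-step random construction (a $p$-random hub set, its common neighborhood, then independent degree-dependent thinning with probability $(1-p)^{m-1-|\bar N(v)|}$ to flatten $\Prob[v\in B]$ to $(1-p)^m$), followed by rescaling all subgraph probabilities by $1/\big(2p(1-p)^m\big)$. The only differences are notational (you measure non-degrees where the paper uses degrees, so your exponent $m-1-d_v$ is the paper's $d_v-n+m$); the construction, the equalization idea, and all constants coincide.
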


\begin{proof}
The proof uses some ideas from \cite[Lemma 3.7]{beimel-etal}. Let $V$ be the
vertex set of $G$. Choose the random complete bipartite subgraph $H_0$ 
of $G$ as follows. The partite
sets of $H_0$ are $A$ and $B_0$. Construct $A$ by adding each vertex $v$ of
$G$ to the set independently with probability $p$. Let $B_0$ consists of all
vertices in $V\setminus A$ which are connected to all elements in $A$.
Clearly, $H_0$ is a subgraph of $G$.
Now let $H$ be a random subgraph of $H_0$, a complete bipartite graph with
partite sets $A$ and $B$, where $B$ is a subset of $B_0$ that contains the
vertex $v\in B_0$ with probability $(1-p)^{d_v-n+m}$ independently from each
other, where $d_v$ is the degree of $v$ in $G$. 
Note that we assumed that $d_v\ge n-m$, so this selection makes sense. 
This two-step process produces a random complete bipartite subgraph 
$H$ of $G$.

Let us fix a vertex $v\in V$. We have $v\in B_0$ if and only if the
vertices not adjacent to $v$ (including $v$ itself) are not in $A$, thus
$\Prob(v\in B_0)=(1-p)^{n-d_v}$. By our construction, $v\in B$ implies $v\in
B_0$ and we have $\Prob(v\in B|v\in B_0)=(1-p)^{d_v-n+m}$. Thus, overall, we
have $\Prob(v\in B)=(1-p)^m$. Note also, that $v\in B$ is independent of $u\in
A$ for all vertices $u$ adjacent to $v$. Thus, for every $uv$ edge of $G$ we
have $\Prob(u\in A,v\in B)=p(1-p)^m$. The same $uv$ edge is also contained in
$H$ if $u\in B$ and $v\in A$, thus the total probability of an edge $uv$ of
$G$ to be covered by $H$ is exactly $2p(1-p)^m$.

Let us associate with each complete bipartite subgraph $H^*$ of $G$ the weight
$\Prob(H=H^*)/(2p(1-p)^m)$. The calculation above verifies that this is a
fractional $\BP$-partition. The load of every vertex $v$ is exactly
$$
 \frac{\Prob(v\in A\cup B)}{2p(1-p)^m} 
    =\frac{p+(1-p)^m}{2p(1-p)^m}
    =\frac12\big(p^{-1}+(1-p)^{-m}\big),
$$
as was required. The total weight of the complete bipartite subgraphs is
$1/(2p(1-p)^m)$ as the sum of probabilities is exactly 1.
\end{proof}

\begin{proof}[of Theorem~\ref{uj}.]
We choose $p^{-1} = m\log e/ (\log m - 2\log\log m)$ where $e$ is the 
base of the natural logarithm. With this choice the right hand side of 
(\ref{eq:lemmadense}) is
$$
  \big(\frac{\log e}2+o(1)\big)\frac m{\log m} <
  \big(0.722+o(1)\big)\frac m{\log m},   
$$
which proves the first claim of Theorem~\ref{uj}.
Note that the total weight of all graphs taking part of the fractional
partition is $1/(2p(1-p)^m)=O(m^2/\log^2m)$.

To turn this fractional partition result on graphs to one on hypergraphs we do
the same as in the proof of Theorem~\ref{thm:d-uniform}. Let $\HH=(V,E)$ be a
$d$-uniform hypergraph. For a $(d-2)$-set $A$ of vertices consider the edge
set $E_A=\{e\in E\mid A\subseteq e\}$ and the graph $G_A=(V,\{e\setminus A\mid
e\in E_A\})$. Ignoring the isolated vertices in $G$ we get a graph on less
than $n$ vertices and with minimum degree at least $n-m$. Applying our result
above we obtain a fractional $\BP$-partition of $G_A$ with complete bipartite
graphs $G_{A,i}$ with weight $w_{A,i}$. The claimed fractional $\BP$-partition
of $\HH$ is formed by the hypergraphs $\HH_{A,i}$ with weights
$w_{A,i}/{d\choose2}$, where the partite sets of $\HH_{A,i}$ are those of
$G_{A,i}$ and the singleton sets contained in $A$. Calculation of the load of
this partition finishes the proof of the theorem.
\end{proof}

\section{Lower bound for the load of fractional covers}\label{sec:this-is-best}
To show Theorem~\ref{thm:upper-bound} and see that the fractional results in
Theorems~\ref{thm:improved-EP} and \ref{thm:d-uniform} are optimal within a
factor of less than $2$ we turn to random hypergraphs. Let $\mathcal H^d(n,p)$
denote the random $d$-uniform hypergraph on $n$
vertices in which each $d$-subset of the vertices is an edge with probability
$p$ and these events are independent. For a $d$-uniform hypergraph $\mathcal
H=(V,E)$ we write $\rho(\mathcal H)=|E|/|V|$ and call it the {\em density} of
$\mathcal H$. Note that $d\rho(\mathcal H)$ is the {\em average  degree} in
$\mathcal H$.

\begin{lemma}\label{density}
Let $d\ge2$ be an integer and $0<p<1$. With probability tending to $1$
as $n$ goes to infinity the maximum density of a $(d,k)$-cuph subhypergraph
of $\mathcal H^d(n,p)$ (for any $k$) is at most $-\log n/\log p$.
\end{lemma}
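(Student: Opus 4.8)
The plan is to run a first-moment (union-bound) argument: I would bound the expected number of $(d,k)$-cuph subhypergraphs whose density exceeds $t:=-\log n/\log p$ and show it tends to $0$. To set this up, note that a $(d,k)$-cuph subhypergraph is determined by its partite sets, which are pairwise disjoint vertex sets of some sizes $a_1,\dots,a_k$; if $N=\sum_i a_i$ denotes the size of its vertex set (we may discard isolated vertices, as they only lower the density), then its edges are exactly the $d$-subsets meeting $d$ distinct parts in a single vertex each. There are $e_d(a_1,\dots,a_k)$ such ``rainbow'' edges, where $e_d$ is the degree-$d$ elementary symmetric polynomial, and since these are distinct $d$-subsets of the vertex set, this particular cuph occurs in $\mathcal H^d(n,p)$ with probability exactly $p^{e_d(a_1,\dots,a_k)}$.

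Next I would convert the density threshold into a usable bound. Because $t\log(1/p)=\log n$, we have $p=n^{-1/t}$, so $p^{e_d}=n^{-e_d/t}$; and density greater than $t$ means $e_d>tN$, whence the occurrence probability is below $n^{-N}$. The structural point is that $e_d(a_1,\dots,a_k)\le{N\choose d}$, simply because the rainbow $d$-subsets form a subfamily of all $d$-subsets of the $N$ vertices (with equality exactly for the all-singleton ``clique'' cuph). Hence density $>t$ forces ${N\choose d}>tN$, i.e.\ ${N-1\choose d-1}>dt$; as $t\to\infty$ this pushes $N$ past a threshold $N_1=N_1(n)$ that itself tends to infinity.

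Then I would assemble the union bound. For a fixed vertex set of size $N$ the cuphs on it correspond to set partitions of that set, so there are at most $B_N$ (the Bell number) of them, and there are ${n\choose N}\le n^N/N!$ ways to choose the vertex set. Multiplying by the probability bound $n^{-N}$ collapses the count to $B_N/N!$, so the expected number of cuphs of density $>t$ is at most $\sum_{N\ge N_1}B_N/N!$. Since $\sum_N B_N/N!=e^{e-1}$ converges and $N_1\to\infty$, this is the tail of a convergent series and tends to $0$; Markov's inequality then gives that no cuph has density exceeding $t$ with probability $1-o(1)$.

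The inequalities ${n\choose N}\le n^N/N!$ and $e_d\le{N\choose d}$ are routine. The delicate point---and what I expect to be the main obstacle---is ensuring the final sum genuinely vanishes rather than merely staying bounded: at a fixed small index the series $\sum_N B_N/N!$ is only $O(1)$, so the argument lives or dies on the observation that $e_d\le{N\choose d}$ forces the relevant vertex count $N$ to grow with $n$, landing one in the convergent tail. Getting the threshold $N_1\to\infty$ cleanly is therefore the heart of the proof.
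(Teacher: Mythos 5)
Your proposal is correct and is essentially the paper's own proof: the same first-moment method, with the occurrence probability of a cuph of density exceeding $t=-\log n/\log p$ bounded by $n^{-N}$, the same convergent-series count of candidate cuphs (you organize it by Bell numbers, giving $\sum_N B_N/N!=e^{e-1}$, where the paper sums a Stirling-type bound over $s$ and $k$ to get $e^{e}$), and the same key observation that the growing threshold forces the relevant vertex count past a bound tending to infinity, so the expectation is a vanishing tail of a convergent series. The only difference is cosmetic: your Bell-number count of the partitions on a fixed vertex set is, if anything, a bit cleaner than the paper's stated per-$k$ bound of $s^k/k!$ on the number of splittings.
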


\begin{proof} We use the first moment method. Let $\mathcal H=(V,E)$ a
$(d,k)$-cuph with $\rho(\mathcal H)>-\log n/\log p$. We can get rid of the
empty partite sets and assume that all $k$ partite sets of $\mathcal H$ is
non-empty. For a fixed size $s=|V|$ and $k$ we
have $n\choose s$ possibilities to choose $V$ as a subset of the fixed vertex
set of $\mathcal H^d(n,p)$ and less than $s^k/k!$ ways split it into the
partite sets that determine $\mathcal H$. For a fixed $\mathcal H$ the chance
that it is a subhypergraph of $\mathcal H^d(n,p)$ is $p^{|E|}=p^{\rho(\mathcal
  H)s}<n^{-s}$. Thus the probability of the existence (in fact the expected
number) of suitably dense
$(d,k)$-cuph subhypergraph (for any $k$) can be estimated as less than
$$\sum_{s,k}{n\choose s}\frac{s^k}{k!}n^{-s}<\sum_{s,k}\frac{s^k}{s!k!}.$$
Note that this estimate is a convergent sum. If summed for all $s,k\ge0$ we
obtain $e^e$, where $e$ is the base of the natural logarithm. However
$d$-uniform hypergraphs of any given fixed size $s$ have a bounded density, so
as $n$ increases and our threshold $-\log n/\log p$ passes this density we can
ignore small values of $s$. This yields a sum that tends to zero as claimed.
\end{proof}

\begin{thm}\label{random_thm}
Let $d\ge 2$ be an integer, $0<p<1$ and $\epsilon>0$. With probability tending
to $1$ as $n$ goes to infinity we have $\vcf_\MP(\mathcal H^d(n,p))\ge(-p\log p/d!-\epsilon)n^{d-1}/\log n$.
\end{thm}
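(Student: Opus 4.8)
The plan is to combine Lemma~\ref{density} with a first-moment estimate on the number of edges through a double-counting argument that is essentially LP duality. Fix any fractional $\MP$-cover of $\HH=\HH^d(n,p)$, that is, an assignment of nonnegative weights $w_H$ to the complete multipartite subhypergraphs $H$ of $\HH$ with $\sum_{H\ni e}w_H\ge1$ for every edge $e$, and write $r$ for its maximum vertex load. Since $\vcf_\MP(\HH)$ is the infimum of $r$ over all such covers, it suffices to show $r\ge(-p\log p/d!-\epsilon)n^{d-1}/\log n$ with probability tending to $1$, uniformly in the choice of cover.

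First I would sum the covering inequality over all edges and interchange summation. Each $H$ is a $(d,k)$-cuph, so it has $|E(H)|=\rho(H)\,|V(H)|$ edges, giving
$$|E(\HH)|\le\sum_{e\in E}\sum_{H\ni e}w_H=\sum_H w_H|E(H)|=\sum_H w_H\,\rho(H)\,|V(H)|.$$
By Lemma~\ref{density}, with probability tending to $1$ every $(d,k)$-cuph subhypergraph satisfies $\rho(H)\le-\log n/\log p$ simultaneously, so on that event I may replace each $\rho(H)$ by this bound. Interchanging summation once more,
$$\sum_H w_H|V(H)|=\sum_{v\in V}\sum_{H\ni v}w_H=\sum_{v\in V}\load(v)\le nr,$$
which yields $|E(\HH)|\le(-\log n/\log p)\,nr$, and hence $r\ge -|E(\HH)|\log p/(n\log n)$.

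It remains to lower bound the number of edges. As $|E(\HH)|$ is binomially distributed with mean $p\binom nd$, a Chernoff (or even Chebyshev) bound shows that with probability tending to $1$ we have $|E(\HH)|\ge(p/d!-o(1))n^d$. On the intersection of this event with the one supplied by Lemma~\ref{density}, which still has probability tending to $1$, the two bounds combine to
$$r\ge\frac{-\log p}{n\log n}\big(p/d!-o(1)\big)n^d=\big(-p\log p/d!-o(1)\big)\frac{n^{d-1}}{\log n},$$
and absorbing the $o(1)$ term into $\epsilon$ for $n$ large completes the argument.

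I do not anticipate a serious obstacle: the substantive content is the density bound of Lemma~\ref{density}, which is already in hand, and the remainder is bookkeeping via a change in the order of summation. The only point demanding care is that Lemma~\ref{density} must hold \emph{uniformly} over the entire family of $(d,k)$-cuph subhypergraphs at once---which it does, being a first-moment bound over the whole family---so that the substitution $\rho(H)\le-\log n/\log p$ is legitimate for every $H$ that can occur in the cover, irrespective of how an adversary might have chosen the cover.
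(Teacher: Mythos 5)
Your proposal is correct and follows essentially the same route as the paper's own proof: an arbitrary fractional $\MP$-cover is double-counted, the density bound of Lemma~\ref{density} (which indeed holds simultaneously for all $(d,k)$-cuph subhypergraphs) replaces each $\rho(H)$, and concentration of the binomial edge count supplies the lower bound on $|E|$. The only cosmetic difference is that you phrase the concentration step via $|E(\HH)|\ge(p/d!-o(1))n^d$ and absorb the error into $\epsilon$ at the end, while the paper folds the $\epsilon$ directly into the density estimate.
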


\begin{proof} Let $\mathcal H^d(n,p)=(V,E)$ and let the hypergraphs $\mathcal
H_i=(V_i,E_i)$ with the weights $w_i$ form a fractional $\MP$-cover of
$\mathcal H^d(n,p)$. By Lemma~\ref{density} we may
assume that $\rho(\mathcal H_i)=|E_i|/|V_i|\le-\log n/\log p$ for all
$i$. This means $w_i|E_i|\le-w_i\log n|V_i|/\log p$. Summing these
inequalities we get
$$|E|\le\sum_{e\in E}\sum_{i:e\in E_i}w_i=\sum_iw_i|E_i|\le-\frac{\log n}{\log
  p}\sum_iw_i|V_i|=-\frac{\log n}{\log p}\sum_{v\in V}l_v,$$
where $l_v$ is the load of the vertex $v$. Thus, for the maximum load $l$ we
have
$$l\ge-\frac{\log p}{\log n}\rho(\mathcal H^d(n,p)).$$
Here the expectation of $\rho(\mathcal H^d(n,p))$ is $\hbox{Exp}
[|E|]/n=p{n\choose d}/n=(p/d!+o(1))n^{d-1}$. Note further that the
distribution of $|E|$ is binomial, and therefore it is concentrated around its
expectation. That is, with probability tending to $1$ we have
$\rho(\mathcal H^d(n,p))\ge(p/d!+\epsilon/\log p)n^{d-1}$. This gives $l\ge(-p\log
p/d!-\epsilon)n^{d-1}/\log n$. As we proved this bound for the maximal load of
an arbitrary fractional $\MP$-cover of $\mathcal H^d(n,p)$ it also applies to
$\vcf_\MP(\mathcal H^d(n,p))$ (still with probability tending to $1$), and finishes the
proof of the theorem.
\end{proof} 

\begin{proof}[Proof of Theorem~\ref{thm:upper-bound}:]
By Theorem~\ref{random_thm} for any $p$ and $\epsilon>0$ and large enough $n$
there exists a hypergraph $\mathcal H$ on $n$ vertices with $\vcf_\MP(\mathcal
H)\ge(-p\log p/d!-\epsilon)n^{d-1}/\log n$, namely the random graph $\mathcal
H^d(n,p)$ works with high probability. Here $-p\log p$ is maximized for
$p=1/e$, where $e$ is the base of the natural logarithm. This choice for $p$
proves the theorem.
\end{proof}

\section*{Acknowledgment}
This research has been partially supported by the Lend\"ulet program of the
Hungarian Academy of Sciences. The first author also acknowledges 
the support from the grant TAMOP-4.2.2.C-11/1/KONV-2012-0001. The second
author was supported by the OTKA grant PD100712. The last author
also acknowledges the support of the grant OTKA NN-102029 and the NSERC
Discovery grant.

\end{document}